\newcommand{\A}{\mathcal{A}}
\newcommand{\B}{\mathcal{B}}
\renewcommand{\P}{\mathbb{P}}
\newcommand{\rng}{\mbox{rng}}
\renewcommand{\rng}{\mbox{rng}}
\newcommand{\ana}{\mathbf{\Sigma}^1_1}
\newcommand{\coana}{\mathbf{\Pi}^1_1}
\newcommand{\lana}{\Sigma^1_1}
\newcommand{\lcoana}{\Pi^1_1}
\newcommand{\lclosed}{\Pi^0_1}
\newcommand{\baire}{\omega^\omega}
\newcommand{\cantor}{2^\omega}
\newcommand{\gdelta}{\mathbf{G}_\delta}
\newcommand{\fsigma}{\mathbf{F}_\sigma}
\newcommand{\sh}{\mathbf{\Sigma}^1_2}
\newcommand{\dualsh}{\mathbf{\Delta}^1_2}
\newcommand{\ldsh}{\Delta^1_2}
\newcommand{\lsh}{\Sigma^1_2}
\newcommand{\ctree}{2^{<\omega}}
\newcommand{\btree}{\omega^{<\omega}}
\newcommand{\ellentuck}{[\omega]^\omega}
\newcommand{\dom}{\mathrm{dom}}
\newcommand{\continuous}{\xrightarrow{\mbox{ }c\mbox{ }}}
\newcommand{\cupdot}{\sqcup}
\newcommand{\bigcupdot}{\bigsqcup}
\renewcommand{\P}{\mathbb{P}}
\newcommand{\finite}{[\omega]^{<\omega}}
\newtheorem{theorem}{Theorem}
\newtheorem{lemma}[theorem]{Lemma}
\newtheorem{proposition}[theorem]{Proposition}
\theoremstyle{definition}
\newtheorem*{definition}{Definition}
\author{Marcin Sabok}\thanks{This research was supported by
  the Mittag-Leffler Institute (Djursholm, Sweden) and by
  the ESF program ``New Frontiers of Infinity: Mathematical,
  Philosophical and Computational Prospects'' through grants
  number 2581 and 2535, and by FWF (Austrian Science Fund)
  grant number P 20835-N13}
\address{Instytut Matematyczny Uniwersytetu Wroc\l awskiego,
  pl. Grunwaldzki $2\slash 4$, $50$-$384$ Wroc\l aw, Poland
}
\address{Kurt G\"odel Research Center for Mathematical
  Logic, W\"ahringer Stra\ss e $25$, A-$1090$ Wien, Austria}
\email{sabok@math.uni.wroc.pl}
\title{Complexity of Ramsey null sets}
\begin{document}

\begin{abstract}
  We show that the set of codes for Ramsey positive analytic
  sets is $\sh$-complete. This is a one projective-step
  higher analogue of the Hurewicz theorem saying that the
  set of codes for uncountable analytic sets is
  $\ana$-complete. This shows a close resemblance between
  the Sacks forcing and the Mathias forcing. In particular,
  we get that the $\sigma$-ideal of Ramsey null sets is not
  ZFC-correct.  This solves a problem posed by Ikegami,
  Pawlikowski and Zapletal.
\end{abstract}

\subjclass[2000]{03E15, 28A05, 54H05} 

\keywords{Ramsey-null sets, $\sh$-complete sets}

\maketitle

\section{Introduction}

Ramsey measurability was introduced by Galvin and Prikry
\cite{prikry:theorem} to prove a Ramsey theorem for Borel
colorings of the plane. Shortly after, their result was
generalized by Silver \cite{silver:ramsey} to those
colorings of the plane which are in the $\sigma$-algebra
generated by analytic sets. Ellentuck
\cite{ellentuck:ramsey} has later pointed out that Ramsey
measurable sets are precisely the sets with the Baire
property in a certain topology on $\ellentuck$, called today
the \textit{Ellentuck topology}.  The basic open sets in the
Ellentuck topology are of the form
$[\sigma,s]=\{x\in\ellentuck:
x\restriction\max(\sigma)=\sigma\ \wedge\ x\setminus
\max(\sigma)\subseteq s\}$ for $\sigma\in\finite$,
$s\in\ellentuck$ such that $\max \sigma<\min s$. Of crucial
importance is the fact that analytic subsets of $\ellentuck$
have the Baire property in the Ellentuck topology. This
leads to the Silver theorem, saying that every analytic set
$A\subseteq\ellentuck$ is \textit{Ramsey measurable}, i.e.
for any basic open set $[\sigma,s]$ as above there is an
infinite set $s'\subseteq s$ such that $[\sigma,s']$ is
either disjoint from $A$, or contained in $A$. If for any
$[\sigma,s]$ there is an infinite $s'\subseteq s$ such that
$[\sigma,s]$ is disjoint from $A$, then we say that $A$ is
\textit{Ramsey null}. A set is \textit{Ramsey positive} if
it is not Ramsey null.  Note that, by the Silver theorem, an
analytic set is Ramsey positive if and only if it contains
some $[\sigma,s]$ as above. It is worth noting here that the
Silver theorem and the notion of Ramsey measurability have
found many applications outside of set theory, e.g. in the
Banach space theory, cf \cite[Section
19.E]{kechris:classical}. Similar notion appeared also in
the early years of forcing as the \textit{Mathias forcing},
which is the forcing with basic open sets in the Ellentuck
topology, ordered by inclusion.  In an equivalent form, it
can be viewed as the quotient Boolean algebra of Borel
subsets of $\ellentuck$ modulo the $\sigma$-ideal of Ramsey
null sets.

Given a (definable) family $\Phi$ of analytic sets we say
that $\Phi$ is \textit{ZFC-correct} if there is a finite
fragment ZFC$^*$ of ZFC such that for any $A\in\ana$ and any
model $M$ of ZFC$^*$ containing a code for $A$ we have that
$$M\models A\in\Phi\quad\mbox{ if and only if
}\quad V\models A\in\Phi.$$ In fact, ZFC-correctness of
$\Phi$ is equivalent to the fact that the set of codes for
analytic sets in $\Phi$ is provably $\dualsh$.

In \cite{zapletal:idealized} Zapletal developed a general
theory of iteration for idealized forcing. One of the
necessary conditions for a $\sigma$-ideal to be
\textit{iterable} (see \cite[Definition
5.1.3]{zapletal:idealized}) is its ZFC-correctness. This
seems to be very natural assumption since most of the
examples share this property. In fact, many of them,
including the $\sigma$-ideals associated to the Cohen, Sacks
or Miller forcing are \textit{$\coana$ on $\ana$} (see
\cite[Definition 3.8.1]{zapletal:idealized} or
\cite[Definition 25.9]{kechris:classical}), which is even
stronger than ZFC-correctness. Among the few examples which
are known to be ZFC-correct but not $\coana$ on $\ana$ is
the $\sigma$-ideal associated to the Laver forcing.

In \cite{ikegami:absoluteness} Ikegami presented a general
framework of generic absoluteness results for
\textit{strongly arboreal} \cite[Definition
2.4]{ikegami:absoluteness} forcing notions $\P$. Again,
however, an important assumption (cf. \cite[Theorem
4.3]{ikegami:absoluteness}, \cite[Theorem
4.4]{ikegami:absoluteness}) is that the set of Borel codes
for sets in $I^*_\P$ (see \cite[Definition
2.11]{ikegami:absoluteness}) is $\sh$ (for a discussion see
also \cite[Paragraph 7.2]{ikegami:absoluteness}). In the
case $\P$ is the Mathias forcing, $I^*_\P$ is the family of
Ramsey null sets.

Mathias forcing is a natural example of a forcing notion,
for which it was not clear whether the results of
\cite{ikegami:absoluteness} and \cite{zapletal:idealized}
can be applied. This motivated Ikegami, Pawlikowski and
Zapletal to ask whether the $\sigma$-ideal of Ramsey null
sets is ZFC-correct. In this paper we answer this question
negatively.  In fact, we prove the following stonger result,
which seems to be interesting in its own right.

\begin{theorem}\label{main}
  The set of codes for Ramsey positive analytic sets is
  $\sh$-complete.
\end{theorem}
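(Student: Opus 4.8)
The plan is to prove $\sh$-completeness in the two usual halves: first that the set of codes in question lies in $\sh$, and then that it is $\sh$-hard under continuous reductions.

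For the upper bound I would not invoke Silver's theorem at all, but compute the complexity straight from the definition of Ramsey nullity. Write the analytic set as $A=A_c$ with code $c$, so that the relation $x\notin A$ is $\coana$ uniformly in $c$. For a fixed $\sigma\in\finite$ and $s'\in\ellentuck$ the condition $[\sigma,s']\cap A=\emptyset$, namely $\forall x\,(x\in[\sigma,s']\to x\notin A)$, is a universal real quantifier over a matrix of the form (Borel $\to\coana$), hence $\coana$. Therefore $\exists s'\subseteq s\,[\sigma,s']\cap A=\emptyset$ is $\sh$, and prefixing the universal quantifiers over all pairs $(\sigma,s)$ (a number quantifier over $\sigma$ and a real quantifier over $s$) lands us in $\nsh$. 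So ``$A$ is Ramsey null'' is $\nsh$ uniformly in $c$, whence ``$A$ is Ramsey positive'' is $\sh$. This is the routine half.

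For hardness I would fix a $\sh$-complete set and reduce it continuously to the set of codes of Ramsey positive analytic sets. A convenient complete set is $\mathcal{C}=\{z\in\baire:\exists y\in\baire\ T(z,y)\text{ is wellfounded}\}$, where $(z,y)\mapsto T(z,y)$ is a fixed recursive assignment of trees on $\omega$ chosen so that $\mathcal{C}$ is $\sh$-complete; note the inner predicate ``wellfounded'' is $\coana$ and is turned into a genuine $\sh$ set by the real quantifier $\exists y$. The goal is to produce, Borel-uniformly in $z$, an analytic set $A_z\subseteq\ellentuck$ that is Ramsey positive iff $z\in\mathcal{C}$; by the Silver theorem recalled above this amounts to arranging that $A_z$ contains some basic set $[\sigma,s]$ exactly when some $T(z,y)$ is wellfounded. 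The construction is the heart of the matter and is where I expect essentially all of the difficulty to lie. Following the one-level-lower template behind the Hurewicz theorem — where one builds from a tree a closed set that is uncountable (contains a perfect set) iff the tree is ill-founded, by ``blowing up'' a single branch into a perfect set — I would encode the tree $T(z,\cdot)$ into a fusion scheme of basic Ellentuck sets. Concretely, I would obtain $A_z$ from a Souslin-type operation whose indexing tree interleaves the levels of a Mathias fusion $\{[\sigma_t,s_t]\}$ with the nodes of $T(z,\cdot)$, so that a witness $y$ with $T(z,y)$ wellfounded licenses a recursion along the wellfounded rank that assembles a genuine fusion $[\sigma,s]\subseteq A_z$, while conversely any basic set inside $A_z$ reads off such a $y$. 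Using wellfoundedness to drive a rank recursion that produces the fusion is exactly what raises the complexity one projective step above the Sacks case.

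Completing the argument then demands three verifications: that $z\mapsto A_z$ is continuous on codes, which is bookkeeping; that $z\in\mathcal{C}$ forces $A_z\supseteq[\sigma,s]$, the delicate positive direction, where the rank recursion must carry the fusion all the way to its infinite limit; and that $A_z\supseteq[\sigma,s]$ forces some $T(z,y)$ to be wellfounded, for which I would extract from the containing basic set a descending pattern in $T(z,y)$ that can be consistent only in the wellfounded case. The main obstacle, and the genuinely new point beyond the classical Hurewicz construction, is to design the interleaving so that the fusion succeeds \emph{precisely} in the wellfounded case and is otherwise diagonalized away — making $A_z$ Ramsey null whenever every $T(z,y)$ is ill-founded. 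Balancing the Mathias combinatorics, in particular controlling the tails $s_t$ so that the diagonal limit $s$ is still infinite, against the transfinite rank recursion is the crux of the proof.
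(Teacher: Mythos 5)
Your upper-bound half is correct, and in fact the computation straight from the definition of Ramsey nullity is clean: the matrix $[\sigma,s']\cap A=\emptyset$ is $\coana$ uniformly in the code, $\exists s'$ gives $\sh$, and the outer $\forall(\sigma,s)$ gives $\nsh$ for ``Ramsey null'', hence $\sh$ for ``Ramsey positive''. (The paper obtains the same bound differently, via continuous reading of names for the Mathias forcing: $A$ is Ramsey positive \iff{} there is a total continuous $f:[\tau,b]\continuous D$ for a closed $D$ projecting to $A$; either route is fine.)

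The hardness half, however, is a plan rather than a proof, and the plan does not engage with the actual obstruction. You propose to build, continuously in $z$, an analytic $A_z$ that contains a cube $[\sigma,s]$ exactly when some $T(z,y)$ is wellfounded, by interleaving a Mathias fusion with a ``rank recursion''. Two concrete problems. First, the rank of a wellfounded tree is an arbitrary countable ordinal while a fusion has length $\omega$, so ``a recursion along the wellfounded rank that assembles a fusion'' has no precise content; and in the converse direction, $[\sigma,s]\subseteq A_z$ is a $\nsh$ condition for analytic $A_z$, so it is not clear how a single containment ``reads off'' a $\coana$ fact such as wellfoundedness of some $T(z,y)$. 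Second, you yourself identify the crux --- forcing $A_z$ to be Ramsey \emph{null} whenever every $T(z,y)$ is illfounded --- and supply no mechanism for it. The paper's route is genuinely different and is designed precisely to avoid constructing such a direct continuous reduction: it first reduces the Becker--Kahane--Louveau $\sh$-complete set $Z\subseteq K(\cantor)$ to the codes of Ramsey positive $\gdelta$ sets by a map that is only $\ana\cup\coana$-submeasurable (Theorem \ref{subcomplete}); that step is combinatorially light because the sets $F_\tau(C)$ are open and the Ramsey analysis is a one-line argument. The real work is then Theorem \ref{completeness}: $(\sh,\ana\cup\coana)$-completeness already implies $\sh$-completeness, proved by building an expansion of $\cantor$ from Sacks uniformization, Cohen reals over countable models, and the Kleene recursion theorem. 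Nothing in your sketch substitutes for that second step, so as it stands the hardness direction is a genuine gap.
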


By now, only a few examples of $\sh$-complete sets have been
known. In fact the only source of such sets is
\cite{becker:harmonic}. On the other hand, one level below
in the projective hierarchy, there are lots of natural
examples of $\ana$-complete sets (cf \cite[Section
27]{kechris:classical}). Theorem \ref{main} should be
compared to the Hurewicz theorem \cite[Theorem
27.5]{kechris:classical} saying that the set of codes for
uncountable analytic (or even closed) sets is
$\ana$-complete. Together, these two results show that on
two consequtive levels of the projective hierarchy we
observe a very similar phenomenon. This reveals an analogy
between the Sacks forcing and the Mathias forcing.

\section{Notation}

For a tree $T\subseteq\btree$ we write $\lim T$ for $\{x\in
\baire:\ \forall n\in\omega\ x\restriction n\in T\}$. If
$\tau\in\btree$, then we denote by $[\tau]$ the set
$\{x\in\baire: \tau\subseteq x\}$. Similarly, for
$\tau\in\finite$ we write $[\tau]$ for $\{x\in\ellentuck:
x\restriction\max(\tau)=\tau\}$. For each $n<\omega$ and
$i\in2$ we write $[(n,i)]$ for $\{x\in\cantor: x(n)=i\}$.
For a tree $T\subseteq\btree$ we write $P(T)$ (respectively
$R(T)$) for the set of all perfect (resp. pruned) subtrees
of $T$.  $P(T)$ and $R(T)$ are endowed with Polish
topologies induced via the natural embeddings into
$\cantor$. In particular $P(\ctree)$ stands for the space of
all perfect binary trees.

If $D\subseteq \baire\times\baire$ and $F\subseteq \baire$
are closed, then we write $f:F\continuous D$ to denote that
$f$ is a continuous function from $F$ to $Y$ whose graph is
contained in $D$. Recall \cite[Proposition
2.5]{kechris:classical} that if $T$ and $S$ are trees such
that $F=\lim T$ and $D=\lim S$, then we can code $f$ by a
monotone map from $T$ to $S$, and any monotone map from $T$
to $S$ gives rise to a continuous function defined on a
comeager subset of $F$.

By the \textit{standard topology on $\ellentuck$} we mean
the one induced from the Baire space $\baire$ via the
standard embedding of $\ellentuck$ into $\baire$. Unless
stated otherwise, $\ellentuck$ is always consider as a
topological space with the standard topology. In special
cases we will indicate when we refer to the Ellentuck
topology on $\ellentuck$.

For a sequence of Polish spaces $\langle X_i:i\in I\rangle$
($I$ countable) we write $\bigsqcup_{i\in I}X_i$ for the
disjoint union of the spaces $X_i$ with the natural Polish
topology.

For a Polish space $X$ we write $K(X)$ for the space of
compact subsets of $X$ with the Vietoris topology (cf.
\cite[Section 4.F]{kechris:classical}) and $F(X)$ for the
Polish space of all closed subsets of $X$ (cf.
\cite[Theorem 12.3]{kechris:classical}). Note that if $X$ is
the Baire space $\baire$ (or $\ellentuck$), then the natural
coding of closed sets by pruned trees gives a homeomorphism
of $F(\baire)$ and $R(\btree)$.

All Polish spaces considered in this paper are assumed to be
endowed with a fixed topology subbase. For the Cantor space
$\cantor$ we fix the subbase consisting of the sets
$[(n,0)]$ and $[(n,1)]$ for $n<\omega$. For zero-dimensional
Polish spaces we assume that the fixed subbase is the one
inherited from $\cantor$ via a fixed embedding into
$\cantor$. In particular, the space of all pruned subtrees
of $\btree$ inherits its subbase from $\cantor$ and this
subbase consists of the sets $\{T\in R(\btree): \sigma\in
T\}$ and $\{T\in R(\btree): \sigma\not\in T\}$. Similarly,
the subbase for $F(\ellentuck)$ consists of the sets $\{D\in
F(\ellentuck):\ D\cap[\sigma]\not=\emptyset\}$ and $\{D\in
F(\ellentuck):\ D\cap[\sigma]=\emptyset\}$ for
$\sigma\in\btree$.

By a \textit{pointclass} we mean one of the classes
$\mathbf{\Sigma}^0_\alpha,\mathbf{\Pi}^0_\alpha$ for
$\alpha<\omega_1$ or $\mathbf{\Sigma}^1_n,\mathbf{\Pi}^1_n$
for $n<\omega$. If $\B$ is a Boolean combination of
pointclasses, $X$ and $Y$ are Polish spaces, $\mathcal{U}$
is the fixed subbase for $Y$, and $f:X\rightarrow Y$ is a
function, then we say that $f$ is
\textit{$\B$-submeasurable} if $f^{-1}(U)\in\B$ for each
$U\in\mathcal{U}$. If $\A$ is a pointclass, $A\subseteq X$
is in $\A$ and $f:A\rightarrow Y$ is a function, then we say
that $f$ is \textit{$\A$-measurable} if for each open set
$V\subseteq Y$ there is $B\in\A$ such that $f^{-1}(V)=A\cap
B$. If $Y$ is zero-dimensional and $A\subseteq Y$ is in
$\A$, then we say that $A$ is $(\A,\B)$\textit{-complete} if
for any zero-dimensional Polish space $Z$ and $A'\subseteq
Z$ in $\A$ there is a $\B$-submeasurable function
$f:Z\rightarrow Y$ such that $f^{-1}(A)=A'$. Note that the
notion of $(\A,\mathbf{\Sigma}^0_1)$-completeness coincides
with the usual notion of $\A$-completeness.

Given a pointclass $\A$ and a Polish space $X$ we code the
$\A$-subsets of $X$ using a fixed \textit{good} (cf.
\cite[Section 3.H.1]{moschovakis}) universal $\A$-set
$A\subseteq \cantor\times X$. We refer to $\{x\in \cantor:
A_x\mbox{ is Ramsey null}\}$ as to the set of \textit{codes
  for Ramsey null $\A$ sets}. Note that the complexity of
this set does not depend on the universal set $A$ as long as
$A$ is good.  Recall also that the standard universal sets
for pointclasses are good.

\section{Correctness}

In this section we show that the $\sigma$-ideal of Ramsey
null sets is not ZFC-correct. Recall that the standard
universal $\gdelta$ set $G\subseteq\cantor\times\ellentuck$
is constructed in such a way that if $x\in\cantor$ codes a
sequence of closed subets $\langle D_n:n<\omega\rangle$ of
$\ellentuck$, then
$$G_x=\ellentuck\setminus\bigcup_{n<\omega}D_n.$$ We can
realize this using $\prod_{n<\omega}F(\ellentuck)$ as the
set of codes. The space $\prod_{n<\omega} F(\ellentuck)$ is
embedded (as a $\gdelta$ set) into $\cantor$ using the
pruned trees. We will show that the set of codes for Ramsey
positive $\gdelta$ sets is $(\sh,\ana\cup\coana)$-complete.

Notice that this result is optimal, i.e. the set of codes
for Ramsey positive closed sets (and hence also $\fsigma$
sets) is $\ana$. This follows from the fact that a closed
set $C\subseteq\ellentuck$ is Ramsey positive if and only if
there is a basic open set $[\sigma,s]$ in the Ellentuck
topology such that
$$[\sigma,s]\subseteq C.$$ The latter condition is arithmetical, 
since both sets $[\sigma,s]$ and $C$ are closed in the
standard topology on $\ellentuck$.

Note also that if $B$ is a $\gdelta$ set (or even Borel),
then the condition
$$\exists [\sigma,s]\quad [\sigma,s]\subseteq B$$ is $\sh$
and hence the set of codes for Ramsey positive $\gdelta$
sets is $\sh$.

Since any $\ana\cup\coana$-submeasurable function is
$\dualsh$-measurable, we immediately get that the set
$\{x\in\cantor: G_x$ is Ramsey positive$\}$ is not
$\dualsh$. This implies that the $\sigma$-ideal of Ramsey
null sets is not ZFC-correct, for otherwise we could express
the fact that $G_x$ is Ramsey null as
\begin{eqnarray*}
  \exists M\mbox{ c.t.m. of ZFC}^*\ \ 
  x\in M\ \wedge\ M\models G_x\mbox{ is Ramsey null}
\end{eqnarray*} 
or as 
\begin{eqnarray*}
  \forall
  M\mbox{ c.t.m. of ZFC}^*\ \ x\in
  M\ \Rightarrow\ M\models G_x\mbox{ is Ramsey null},
\end{eqnarray*}
where ZFC$^*$ is a fragment of ZFC recognizing the
correctness of the $\sigma$-ideal of Ramsey null sets.

\begin{theorem}\label{subcomplete}
  The set of codes for Ramsey positive $\gdelta$ subsets of
  $[\omega]^\omega$ is $(\sh,\ana\cup\coana)$-complete.
\end{theorem}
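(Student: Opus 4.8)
The plan is to prove the two halves of completeness separately. Membership of the target set in $\sh$ is already recorded above: if $x$ codes $\langle D_n:n<\omega\rangle$ and $B=G_x=\ellentuck\setminus\bigcup_n D_n$, then $B$ is Ramsey positive iff $\exists[\sigma,s]\ [\sigma,s]\subseteq B$, and for a $\gdelta$ set $B$ this condition is $\sh$. So the work is entirely in the hardness direction: for every zero-dimensional Polish space $Z$ and every $\sh$ set $A'\subseteq Z$ I must produce an $\ana\cup\coana$-submeasurable map $f\colon Z\to\prod_{n<\omega}F(\ellentuck)$ with $f^{-1}(\{x:G_x\ \mbox{Ramsey positive}\})=A'$.

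First I would fix a workable normal form for the source. Writing $\sh=\exists^{\baire}\coana$ and coding $\coana$ by well-foundedness, I represent $A'$ as $A'=\{z:\exists y\in\baire\ T(z,y)\in\W\}$, where $(z,y)\mapsto T(z,y)$ is a continuous assignment of trees on $\omega$ and $\W$ is the set of well-founded trees; equivalently one may simply reduce from a good universal $\sh$ set. The goal of the construction is then to arrange the Mathias conditions $[\sigma,s]$ sitting inside $G_{f(z)}$ to be in exact correspondence with the witnesses $y$ for which $T(z,y)\in\W$, so that the real quantifier $\exists[\sigma,s]$ computing Ramsey positivity plays the role of the quantifier $\exists y$ computing membership in $A'$.

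The core of the argument is the design of the closed sets $D_n(z)$. The idea is to encode a candidate witness $y$ into an infinite set $s$ and to place into $\bigcup_n D_n(z)$ exactly those infinite sets that certify an infinite branch of $T(z,y)$; then no infinite $t\subseteq s$ meets $\bigcup_n D_n(z)$ precisely when $T(z,y)$ has no branch, i.e. when $T(z,y)\in\W$, which is what $[\emptyset,s]\subseteq G_{f(z)}$ should mean. It is here that a genuine $\fsigma$ set (a countable union of closed sets, not a single closed set) is indispensable: as the text already notes, for closed $C$ the positivity of $C$ is only $\ana$, so the extra $\gdelta$ layer is exactly what realizes the jump from $\ana$ to $\sh$, and the sets $D_n(z)$ must be stratified so as to exploit that the condition ``$[\sigma,s]\subseteq U$'' for closed $[\sigma,s]$ and open $U$ in the non-compact space $\ellentuck$ genuinely requires the universal quantifier over $[\sigma,s]$. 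Once the $D_n(z)$ are defined I would check $\ana\cup\coana$-submeasurability by verifying, for each $n$ and each basic clopen $[\tau]$, that $\{z:D_n(z)\cap[\tau]\neq\emptyset\}$ lies in $\ana\cup\coana$; since the subbasic open sets of $\prod_{n<\omega}F(\ellentuck)$ are precisely these sets and their complements, and $\ana\cup\coana$ is closed under complementation, this suffices, and it is consistent with the expected shape of the construction, in which membership in $D_n(z)$ is read off $z$ by a single projective (analytic or coanalytic) test rather than an arithmetical one.

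The main obstacle I anticipate is the mismatch between the two quantifier patterns. Ramsey positivity at $\emptyset$ has the form $\exists s\,\forall t\subseteq s\,(t\notin\bigcup_n D_n(z))$, a Mathias/Ramsey quantifier, whereas membership in $A'$ has the form $\exists y\,\forall(\mbox{branch})\,\psi$. A naive coding that reads the witness $y$ off the \emph{variable} set $t$ forces $y$ to change as $t$ ranges over the infinitely many subsets of $s$, which collapses the inner ``$\forall t\subseteq s$'' into a ``$\forall y$'' and yields a $\nsh$ condition instead of a $\sh$ one; reading $y$ off a \emph{single} closed set, on the other hand, only reaches $\ana$. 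The heart of the proof must therefore pin the witness $y$ down uniformly across all infinite $t\subseteq s$. I expect this to be accomplished by a fusion: building $s$ in $\omega$-many stages, committing at stage $n$ to one further value of $y$ while thinning $s$ so that no later infinite subset can certify a branch of $T(z,y)$, with the closed set $D_n(z)$ guarding stage $n$. Managing this recursion while keeping each $\{z:D_n(z)\cap[\tau]\neq\emptyset\}$ inside $\ana\cup\coana$ is, I expect, the delicate point of the whole theorem.
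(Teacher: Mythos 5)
There is a genuine gap: your argument stops exactly where the proof has to begin. The membership half is fine, but for hardness you only describe what the closed sets $D_n(z)$ ought to accomplish; you never define them. Worse, you correctly diagnose the obstruction — Ramsey positivity has the form $\exists s\,\forall t\subseteq s$ while your normal form $\exists y\in\baire\ (T(z,y)$ well-founded$)$ has a witness ranging over $\baire$ — and then defer its resolution to an unspecified ``fusion.'' That deferral is not a technicality. Any scheme that codes a real $y$ into an infinite set $s$ so that the inner universal quantifier over infinite $t\subseteq s$ still refers to the \emph{same} $y$ would require $y$ to be recoverable from every infinite subset of $s$, and infinite subsets destroy essentially any gap- or enumeration-based coding of an arbitrary element of $\baire$. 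So the reduction from well-foundedness, as set up, has no visible way to close, and the proposal does not constitute a proof.

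The paper avoids this mismatch rather than overcoming it, by choosing a different $\sh$-complete source: the Becker--Kahane--Louveau set $Z=\{C\in K(\cantor):\ \exists a\in[\omega]^\omega\ \forall x\in C\ \lim_{n\in a}x(n)=0\}$. Here the existential witness is itself an infinite subset of $\omega$, and the witnessing property is monotone under passing to infinite subsets of $a$ — which is precisely the shape of a Mathias condition. The reduction sends $C$ to the code for $G_{F(C)}=\bigcap_{\tau\in\finite}F_\tau(C)$, where $F_\tau(C)$ is the (open) set of $a\in\ellentuck$ such that either $a\notin[\tau]$ or no $x\in C$ is identically $1$ on $a\setminus\max(\tau)$; compactness of $C$ makes the complement of each $F_\tau(C)$ closed, one checks subbasic preimages are $\ana$ (hence the map is $\ana\cup\coana$-submeasurable), and a short direct argument shows $C\in Z$ iff $G_{F(C)}$ is Ramsey positive. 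If you want to salvage your outline, the lesson is that the ``delicate point'' you flag should be dissolved by reducing from a complete set whose witness already lives in $[\omega]^\omega$ with a subset-monotone certifying condition, rather than attacked by fusion.
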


\begin{proof}
  Consider the following set $$Z=\{C\in K(\cantor):\ \
  \exists a\in[\omega]^\omega\ \forall x\in C\ \ \lim_{n\in
    a} x(n)=0\}$$ and recall that $Z$ is $\sh$-complete, by
  a result of Becker, Kahane and Louveau \cite[Theorem
  3.1]{becker:harmonic}. We will find a
  $\ana\cup\coana$-submeasurable reduction from $Z$ to
  $\{x\in\cantor: G_x$ is Ramsey positive$\}$.

  For $C\in K(\cantor)$ and $\tau\in\finite$ we define
  $F_\tau(C)\subseteq\ellentuck$ as follows. Put
  $$F_\tau(C)=\{a\in\ellentuck:\ \big[\neg(\exists x\in C\ \forall n\in
  a\setminus\max(\tau)\quad x(n)=1)\big]\ \vee\
  a\not\in[\tau]\}.$$

\begin{lemma}
  For each $C\in K(\cantor)$ and $\tau\in\ctree$ the set
  $F_\tau(C)$ is open in the standard topology on
  $\ellentuck$.
\end{lemma}

\begin{proof}
  Write $$\bar C=\{(a,x)\in\ellentuck\times\cantor:\ x\in C\
  \wedge\ x\restriction (a\setminus\max(\tau))=1\}$$ and let
  $\pi$ be the projection to $\ellentuck$ from
  $\ellentuck\times\cantor$.  Since $\bar C$ is closed in
  $\ellentuck\times\cantor$, the set $\pi''(\bar C)$ is
  closed in $\ellentuck$. Now we have
  $$\ellentuck\setminus F_\tau(C)=[\tau]\cap\pi''(\bar C).$$
\end{proof}

\begin{lemma}\label{borel}
  For each $\tau\in\ctree$ the function $$K(\cantor)\ni
  C\mapsto \ellentuck\setminus F_\tau(C)\in F(\ellentuck)$$
  is $\ana\cup\coana$-submeasurable.
\end{lemma}

\begin{proof}
  Recall that the subbase for the space $F(\ellentuck)$
  consists of the sets $$\{D\in F(\ellentuck):\
  D\cap[\sigma]\not=\emptyset\},\quad \{D\in F(\ellentuck):\
  D\cap[\sigma]=\emptyset\}$$ for $\sigma\in\btree$. It is
  enough to prove that for each $\sigma\in\btree$ the
  preimage $A_\sigma$ of the set $\{D\in F(\ellentuck):\
  D\cap[\sigma]\not=\emptyset\}$ is $\ana$ in $K(\cantor)$.
  Moreover, it is enough to show this for
  $\sigma\supseteq\tau$. Indeed, $\ellentuck\setminus
  F_\tau(C)$ is always contained in $[\tau]$, so for
  $\sigma\not\supseteq\tau$ we have $A_\sigma=A_\tau$ if
  $\sigma\subseteq\tau$ and $A_\sigma=\emptyset$ otherwise.
  But if $\sigma\supseteq\tau$, then $A_\sigma$ is equal to
  $$\{C\in K(\cantor): \pi''(\bar
  C)\cap[\sigma]\not=\emptyset\},$$ which is the same as
  $$\{C\in K(\cantor):\ \exists x\in C\ \exists
  a\in[\sigma]\quad x\restriction
  (a\setminus\max(\tau))=1\}.$$ The latter set is easily
  seen to be $\ana$.
\end{proof}

Now we define
$F:K(\cantor)\rightarrow\prod_{n<\omega}F(\ellentuck)$ so
that $F(C)=\langle \ellentuck\setminus
F_\tau(C):\tau\in\finite\rangle$ (we use some fixed
recursive bijection between $\omega$ and $\finite$). In
other words, $F(C)$ is the code for the $\gdelta$ set
$$G_{F(C)}=\bigcap_{\tau\in\ctree} \ellentuck\setminus F_\tau(C).$$ Note that,
by Lemma \ref{borel}, the function $F$ is
$\ana\cup\coana$-submeasurable. We will be done once we
prove the following lemma.

\begin{lemma}\label{cplte}
  For $C\in K(\cantor)$ we have $$C\not\in Z\ \ \mbox{if and
    only if}\ \ G_{F(C)}\mbox{ is Ramsey null}.$$
\end{lemma}

\begin{proof}
  ($\Leftarrow$) Suppose $F(C)$ is a code for a Ramsey null
  set. We must show that $C\not\in Z$. Take any
  $a\in\ellentuck$. We shall find $x\in C$ such that
  $$\lim_{n\in a}x(n)\not=0.$$ Since $G_{F(C)}$ is Ramsey null, there is
  $b\subseteq a$, $b\in\ellentuck$ such that
  $$[b]^\omega\cap G_{F(C)}=\emptyset.$$ In particular,
  there is $\tau\in\finite$ such that $b\not\in F_\tau(C)$.
  This means that $$b\in[\tau]\quad\wedge\quad\exists
  x\in C\ \forall n\in b\setminus \max(\tau)\quad x(n)=1.$$
  Hence $x$ is constant $1$ on $b\setminus\max(\tau)$, so
  $\lim_{n\in a}x(n)\not=0$, as desired.

  ($\Rightarrow$) Suppose now that $C\not\in Z$. We must
  show that $F(C)$ is a code for a Ramsey null set. Take any
  $\tau\in\finite$ and $a\in\ellentuck$ such that
  $\max(\tau)<\min(a)$. We shall find $b\in[a]^\omega$ such that
  $$[\tau,b]\cap G_{F(C)}=\emptyset.$$ It is enough to find
  $b\in[a]^\omega$ such that $[\tau,b]\cap
  F_\tau(C)=\emptyset$. Since it is not the case that
  $$\forall x\in C\ \lim_{n\in a} x(n)=0,$$ there is $x_0\in
  C$ and $b\in[a]^\omega$ such that $x_0\restriction b=1$.
  We shall show that $$[\tau,b]\cap F_\tau(C)=\emptyset.$$
  Suppose not. Take any $y\in[\tau,b]\cap F_\tau(C)$. Then
  $y\in[\tau]$, $y\setminus\max(\tau)\subseteq b$ and $y\in
  F_\tau(C)$. So, by the definition of $F_\tau$, we have
  $$\neg(\exists x\in C\ \forall y\setminus\max(\tau)\quad
  x(n)=1).$$ But we saw that $x_0\in C$ and $x_0\restriction
  b=1$, so we have $$x_0\restriction (y\setminus\max(\tau))=1.$$
  This gives a contradiction and shows that $[\tau,b]\cap
  F_\tau(C)=\emptyset$, as required.
\end{proof}
\noindent This ends the proof of the theorem.
\end{proof}

\section{Completeness}

In this section we show the following.

\begin{theorem}\label{completeness}
  Any $(\sh,\ana\cup\coana)$-complete subset of a Polish
  zero-dimensional space is $\sh$-complete.
\end{theorem}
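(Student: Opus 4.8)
The plan is to prove the statement directly: assuming $A\subseteq Y$ is $(\sh,\ana\cup\coana)$-complete (so in particular $A\in\sh$), I will produce for every $\sh$ subset of a zero-dimensional Polish space a genuinely \emph{continuous} reduction to $A$. Since every zero-dimensional Polish space embeds into $\baire$, and since the coding is done with a good universal set (which has the substitution, i.e.\ s-m-n, property), it suffices to continuously reduce a single object. So I fix a good universal $\sh$ set $U\subseteq\cantor\times\baire$ and look for a \emph{jointly} continuous $\Psi\colon\cantor\times\baire\to Y$ with $\Psi^{-1}(A)=U$. Once such a $\Psi$ exists, fixing the code coordinate gives a continuous reduction $z\mapsto\Psi(e,z)$ of the arbitrary $\sh$ set $U_e$ to $A$, goodness transports this uniformly to every product space, and together with $A\in\sh$ this is exactly $\sh$-completeness.

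First I would feed the hypothesis its natural input. Viewing $U$ as a $\sh$ subset of the zero-dimensional space $\cantor\times\baire$ and applying $(\sh,\ana\cup\coana)$-completeness, I obtain a $\ana\cup\coana$-submeasurable $\Phi\colon\cantor\times\baire\to Y$ with $\Phi^{-1}(A)=U$. Writing $\{U_n\}$ for the fixed subbase of $Y$ (which, being inherited from $\cantor$, is closed under complementation), each $\Phi^{-1}(U_n)$ is $\ana$ or $\coana$, and complementary subbasic sets have complementary ($\coana$, resp.\ $\ana$) preimages. Thus $\Phi$ is $\dualsh$-measurable, and the sole obstruction to continuity is that these preimages are merely $\ana$ or $\coana$ instead of clopen.

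The heart of the argument is to manufacture a continuous $\Psi$ out of this data by a fixed-point (recursion theorem) construction, exploiting the uniformity of good universal sets. The approach I would try is to define, for a code parameter $c\in\cantor$, a genuinely continuous map $\Psi_c\colon\cantor\times\baire\to Y$ which resolves each non-clopen condition not by inspecting $\Phi$ directly but by reading it off the self-referential code $c$, i.e.\ by substituting the relevant sections of $U_c$ presented as clopen facts about $c$. Because $A\in\sh$ and $\Psi_c$ is continuous uniformly in $c$, the set $\{(e,z):\Psi_c(e,z)\in A\}$ is $\sh$ with a code $r(c)$ depending continuously (recursively) on $c$, by the s-m-n property. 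By the recursion theorem there is $c^\ast$ with $U_{c^\ast}=U_{r(c^\ast)}=\{(e,z):\Psi_{c^\ast}(e,z)\in A\}$; if $\Psi_c$ is designed so that this fixed-point identity forces $\{(e,z):\Psi_{c^\ast}(e,z)\in A\}=U$, then I would set $\Psi:=\Psi_{c^\ast}$.

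The main obstacle is precisely this design step, and it is where the asymmetry between $\ana$ and $\coana$ bites. Existential ($\ana$) preimages can be made clopen by carrying an explicit witness coordinate, but universal, well-founded ($\coana$) preimages admit no continuous certificate, so $\Psi_c$ cannot simply imitate $\Phi$; the role of the self-reference is to route all such $\sh$ information through the single code $c$ and let the recursion theorem guarantee global consistency. The delicate verification is that the fixed point yields an \emph{exact} reduction ($\Psi^{-1}(A)=U$ on the nose) rather than one valid only off a meager set. Here the fact that $\ana\cup\coana$ sets have the Baire property, so that $\Phi$ is continuous on a dense $\gdelta$ set, can be used to control the construction on individual basic clopen pieces and drive a fusion that pins the values down everywhere. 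Checking that these localizations cohere into one continuous $\Psi$ realizing the exact preimage is the step I expect to demand the most care.
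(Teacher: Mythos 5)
Your plan correctly identifies the shape of the problem (a $\ana\cup\coana$-submeasurable reduction exists by hypothesis, and the whole difficulty is upgrading it to a continuous one), and two of your proposed tools --- the Kleene recursion theorem and Baire category for $\ana\cup\coana$ sets --- do appear in the paper's argument. But the proposal has a genuine gap exactly where you say the work lies: the ``design step'' for $\Psi_c$ is never carried out, and the strategy of repairing $\Phi$ \emph{on its own domain} into a continuous map with the \emph{same exact} preimage of $A$ is not going to work. A $\dualsh$-measurable reduction does not in general yield a continuous reduction of the same set; that is precisely why the theorem is nontrivial. Your fallback --- that $\Phi$ is continuous on a dense $\gdelta$ and a ``fusion pins the values down everywhere'' --- only gives agreement off a meager set, and a continuous function agreeing with $\Phi$ comeagerly can have a completely different preimage of $A$. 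Nothing in the recursion-theorem self-reference, as described, forces the $\coana$-preimages (which admit no continuous witnesses) to become decidable by clopen conditions; the fixed point only guarantees consistency of a code with itself, not continuity of the resulting map.

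The paper's route avoids this by never trying to fix $\Phi$ on its original domain. It introduces an $(\sh,\ana\cup\coana)$-\emph{expansion} of $\cantor$: a $\sh$ set $E(\cantor)\subseteq\cantor\times\cantor$ with a $\sh$-measurable retraction $r:E(\cantor)\to\cantor$ such that every $\ana\cup\coana$-submeasurable $f$ on the ambient space factors, on some closed $F\subseteq E(\cantor)$ with $r''F=\cantor$, as $g\circ r$ with $g$ continuous. One then pulls back a known $\sh$-complete set $C\subseteq\cantor$ (Becker--Kahane--Louveau) to $C'=r^{-1}(C)$, applies $(\sh,\ana\cup\coana)$-completeness of $A$ to $C'$, and the factorization gives a genuinely continuous $g$ with $g^{-1}(A)=C$; completeness of $A$ follows by composing with continuous reductions to $C$. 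The engine behind the expansion is the paper's Proposition on tree selection: a $\sh$-measurable assignment $x\mapsto T(x)$ of perfect trees such that for every partition of $\cantor\times\cantor$ into an $\ana$ and a $\coana$ piece some $\lim T(x)$ lands entirely in one side of the section over $x$. This is where the recursion theorem is actually used (together with Sacks uniformization and Cohen reals over countable models), and the continuous $g$ is then assembled by a Luzin-scheme fusion of these trees along an inverse limit of product spaces. Without this extra ``room'' provided by the expansion --- the freedom to choose which $\sh$ set you reduce and to shrink to a closed set on which $f$ is constant on $r$-fibers --- your one-shot fixed-point construction has no mechanism to produce continuity, so the proposal as written does not close.
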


Together with Theorem \ref{subcomplete}, this will prove
Theorem \ref{main}. The proof of Theorem \ref{completeness}
will be based on some ideas of Harrington and Kechris from
\cite{harrington:kechris} and of Kechris from
\cite{kechris:completeness}. 

We will need the following lemma.

\begin{lemma}[Sacks uniformization]\label{sacks}
  Let $Y$ be a Polish space. If $B\subseteq \cantor\times Y$
  is Borel and its projection on $\cantor$ is uncoutable,
  then there is a perfect tree $S\subseteq\ctree$ and a
  continuous function $f:\lim S\rightarrow Y$ such that
  $f\subseteq B$.
\end{lemma}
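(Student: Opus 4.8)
The plan is to prove Lemma~\ref{sacks}, the Sacks uniformization lemma. The statement asserts that a Borel set $B\subseteq\cantor\times Y$ whose projection $\pi''B$ onto $\cantor$ is uncountable admits a perfect-tree section: a perfect $S\subseteq\ctree$ together with a continuous $f:\lim S\rightarrow Y$ whose graph lies inside $B$.

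\begin{proof}
First I would invoke the Jankov--von Neumann uniformization theorem to obtain a single $\sigma(\ana)$-measurable (in particular Baire measurable) uniformizing function $g:\pi''B\rightarrow Y$ whose graph is contained in $B$. Since $\pi''B$ is an uncountable analytic subset of $\cantor$, by the perfect set property it contains a homeomorphic copy of $\cantor$; more precisely there is a continuous injection $e:\cantor\rightarrow\pi''B$. The composition $g\circ e:\cantor\rightarrow Y$ is then a Baire measurable function defined on all of $\cantor$, and its graph, after applying $e$, still lands inside $B$.

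The key step is to pass from Baire measurability to genuine continuity on a perfect set. Here I would use the standard fact that a Baire measurable function is continuous on a comeager $\gdelta$ set, so there is a dense $\gdelta$ set $G\subseteq\cantor$ on which $g\circ e$ is continuous. A dense $\gdelta$ subset of $\cantor$ contains a copy of $\cantor$, i.e.\ there is a perfect tree $S\subseteq\ctree$ with $\lim S\subseteq G$ (one builds $S$ by a fusion argument, recursively splitting while diving into the dense open sets witnessing comeagerness). On $\lim S$ the map $g\circ e$ is continuous, and setting $f=g\circ e\restriction \lim S$ and transporting through $e$ gives the desired continuous section with $f\subseteq B$.

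The main obstacle I expect is purely bookkeeping rather than conceptual: one must keep careful track of the fact that the final function is required to be continuous \emph{as a function into $Y$ with graph inside $B$}, so the composition with the embedding $e$ into $\pi''B$ must be handled so that the graph condition $f\subseteq B$ is literally preserved. Since $e$ has graph in $\cantor\times\pi''B$ and $g\subseteq B$, the composite graph $\{(e(z),g(e(z))):z\in\lim S\}$ sits inside $B$, but to present $f$ as defined on a perfect subset of $\cantor$ (so that $\lim S$ is genuinely a perfect tree in $\ctree$) one should replace $S$ by the tree coding the perfect image $e''(\lim S)$, using that a continuous injective image of $\cantor$ is again a perfect subset of $\cantor$. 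Checking that this image is closed (hence coded by a genuine perfect tree) and that the inverse is continuous there is the one place where the zero-dimensional, compact structure of $\cantor$ is used essentially.
\end{proof}
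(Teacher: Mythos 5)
Your proof is correct, but it takes a genuinely different route from the paper. The paper does not argue from first principles at all: it obtains the lemma as an instance of Zapletal's general $P_I$-uniformization theorem for proper idealized forcings, combined with the fact that Sacks forcing has continuous reading of names; the payoff of that packaging is that the identical two-line argument transfers verbatim to the Mathias forcing, which is how the paper then concludes that the set of codes for Ramsey positive analytic sets is $\sh$. Your argument is instead a self-contained classical one: Jankov--von Neumann gives a $\sigma(\ana)$-measurable uniformization $g$ of $B$, the perfect set property for the analytic projection gives a continuous injection $e$ of $\cantor$ into $\dom(g)$, Baire measurability of $g\circ e$ gives continuity on a comeager $\gdelta$ set, and a fusion/compactness argument extracts a perfect closed set on which everything is continuous. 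You correctly flag and resolve the one delicate point, namely that the perfect tree must live in the first coordinate of $B$, so the final perfect set has to be pushed forward through $e$, using compactness of $\lim S$ to see that $e''(\lim S)$ is closed and that $e$ restricted to it is a homeomorphism onto a perfect set. What your approach buys is elementarity and independence from the idealized-forcing machinery; what it loses is the uniformity that lets the paper reuse the same statement for the Mathias forcing without writing a new proof.
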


Zapletal proved \cite[Proposition 2.3.4]{zapletal:idealized}
a general version of the $P_I$-uniformization for any
$\sigma$-ideal $I$ for which the forcing $P_I$ is proper.
The above lemma follows directly from \cite[Proposition
2.3.4]{zapletal:idealized} and the fact that the Sacks
forcing has \textit{continuous reading of names} (see
\cite[Definition 3.1.1]{zapletal:idealized}).

Since the Mathias forcing also has continuous reading of
names, the same uniformization result is true for the
Mathias forcing. In particular, this implies that the set of
codes for $\ana$ Ramsey positive sets is a $\sh$ set.
Indeed, if $A\subseteq\ellentuck$ is $\ana$ and
$D\subseteq\ellentuck\times\baire$ is a closed set
projecting to $A$, then the fact that $A$ is Ramsey positive
can be written as $$\exists[\tau,b]\ \exists
f:[\tau,b]\continuous D\quad f\mbox{ is total}.$$ Saying
that $f$ is total is a $\coana$ statement, which makes the
above $\sh$.

\begin{definition}
  Let $\A$ be a pointclass and $\B$ be a Boolean
  combinations of pointclasses. An
  \textit{$(\A,\B)$-expansion} of a Polish space $Y$ is an
  $\A$-subset $E(Y)$ of a Polish space $Y'$ together with an
  $\A$-measurable map $r:E(Y)\rightarrow Y$ satisfying the
  following. For every zero-dimensional Polish space $X$ and
  $\B$-submeasurable map $f:Y'\rightarrow X$ there is a
  closed (in $Y'$) set $F\subseteq E(Y)$ and a continous map
  $g:Y\rightarrow X$ such that $r''(F)=Y$ and the following
  diagram commutes.
  \begin{displaymath}
    \xymatrix{
      F \ar@{>}[rd]^-{f\restriction F}  \ar@{>}[d]_-{r\restriction F}&   \\
      Y \ar@{>}[r]_-{g} & X
    }
  \end{displaymath}
  Note that in this definition we may demand that
  $X=\cantor$.
\end{definition}

The above notion is relevant in view of the following.

\begin{proposition}\label{use}
  Let $X$ and $Y$ be zero-dimensional Polish spaces,
  $A\subseteq X$ be $(\A,\B)$-complete and $C\subseteq Y$ be
  $\A$-complete. If $Y$ has an $(\A,\B)$-expansion, then $A$
  is $\A$-complete.
\end{proposition}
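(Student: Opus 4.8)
The plan is to unpack the definition of $(\A,\B)$-expansion and feed the expansion's auxiliary space $Y'$ into the completeness of $A\subseteq X$, then pull the resulting reduction back to an $\A$-reduction on $Y$ itself. Concretely, I start with an arbitrary zero-dimensional Polish space $Z$ together with an arbitrary set $C'\subseteq Z$ in $\A$, and I must produce a continuous (equivalently, for the $\A$-completeness conclusion, merely $\mathbf{\Sigma}^0_1$-submeasurable) reduction of $C'$ to $C$. The bridge is the $\A$-complete set $C\subseteq Y$: since $C$ is $\A$-complete, there is a continuous $h:Z\rightarrow Y$ with $h^{-1}(C)=C'$. So it suffices to reduce $C\subseteq Y$ continuously to $A\subseteq X$, i.e.\ the whole content is to show that $C\le_{\mathrm{c}} A$; composing with $h$ then finishes.

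To reduce $C$ to $A$, first I would invoke $(\A,\B)$-completeness of $A$. The expansion gives an $\A$-subset $E(Y)\subseteq Y'$ with an $\A$-measurable $r:E(Y)\rightarrow Y$. The set $r^{-1}(C)$ is an $\A$-subset of the zero-dimensional space $Y'$ (here I should check $r^{-1}(C)\in\A$, using that $r$ is $\A$-measurable and $C\in\A$, plus closure of the pointclass $\A$ under the relevant operations). By $(\A,\B)$-completeness of $A$, there is a $\B$-submeasurable $f:Y'\rightarrow X$ with $f^{-1}(A)=r^{-1}(C)$. Now I apply the defining property of the expansion to this very $f$ (taking $X$ itself, or $\cantor$, as the target): this yields a closed $F\subseteq E(Y)$ with $r''(F)=Y$ and a \emph{continuous} $g:Y\rightarrow X$ making the triangle commute, so that $g\circ r\restriction F=f\restriction F$.

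The final step is to verify that $g$ is the desired continuous reduction $g^{-1}(A)=C$. For any $y\in Y$, since $r''(F)=Y$, pick $z\in F$ with $r(z)=y$; then $g(y)=g(r(z))=f(z)$. Hence $y\in C\iff r(z)\in C\iff z\in r^{-1}(C)\iff z\in f^{-1}(A)\iff f(z)\in A\iff g(y)\in A$. Thus $g^{-1}(A)=C$ and $g$ is continuous, giving $C\le_{\mathrm{c}} A$; composing with $h$ gives the reduction of $C'$ to $A$, so $A$ is $\A$-complete.

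The point requiring the most care — and the reason the expansion machinery is set up this way — is the passage from the merely $\B$-submeasurable $f$ to an honestly \emph{continuous} reduction $g$. A $\B$-submeasurable reduction alone does not witness $\A$-completeness, so the commuting diagram is exactly the device that launders the complicated $f$ into a continuous $g$ at the cost of replacing $Y$ by the surjective image $r''(F)=Y$ of a closed piece $F$ of the expansion. I expect the main obstacle to be confirming that the defining property of the expansion applies to this particular $f$ (it does, since $f$ is $\B$-submeasurable on all of $Y'$) and that independence of the choice of preimage $z$ is guaranteed by commutativity of the diagram rather than needing to be arranged separately; everything else is bookkeeping about pointclass closure and composition of reductions.
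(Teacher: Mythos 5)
Your proof is correct and follows essentially the same route as the paper's: pull $C$ back along $r$ to get an $\A$-set in $Y'$, reduce it to $A$ by a $\B$-submeasurable $f$ via $(\A,\B)$-completeness, and use the expansion's commuting diagram together with $r''(F)=Y$ to extract a continuous $g$ with $g^{-1}(A)=C$. The paper leaves the final verification of $g^{-1}(A)=C$ and the composition with a reduction witnessing $\A$-completeness of $C$ implicit, which you spell out; otherwise the arguments coincide.
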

\begin{proof}
  Let $Y'$, $E(Y)$ and $r:E(Y)\rightarrow Y$ be an
  $(\A,\B)$-expansion of $Y$. Put $C'=r^{-1}(C)$ and note
  that $C'\subseteq Y'$ is also in $\A$. Let
  $f:Y'\rightarrow X$ be $\B$-submeasurable such that
  $f^{-1}(A)=C'$. Take $F\subseteq E(Y)$ and $g:X\rightarrow
  Y$ as in the definition of expansion. Note that
  $g^{-1}(A)=C$.
\end{proof}

In view of Proposition \ref{use} and the fact that there
exists a $\sh$-complete subset of the Cantor space
\cite[Theorem 3.1]{becker:harmonic}, Theorem
\ref{completeness} will follow once we prove the following.

\begin{theorem}\label{expansion}
  There exists a $(\sh,\ana\cup\coana)$-expansion of the
  Cantor space.
\end{theorem}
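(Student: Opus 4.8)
The plan is to realize the expansion as a space of ``witness codes'' sitting over $\cantor$ and to convert an arbitrary $\ana\cup\coana$-submeasurable map into a continuous one by a fusion argument built on Lemma~\ref{sacks}. By the last remark in the definition of an expansion I may assume the target is $X=\cantor$. Thus a $\ana\cup\coana$-submeasurable $f\colon Y'\to\cantor$ amounts to a sequence of coordinate maps $f_m\colon Y'\to 2$, each of whose level sets $f_m^{-1}(1)$ lies in $\ana\cup\coana$. The first step is to make this tree-theoretic: for each $m$ there is a continuous map $c_m\colon Y'\to R(\btree)$ such that $f_m(z)=1$ is equivalent to $c_m(z)$ being ill-founded when the coordinate is analytic, and to $c_m(z)$ being well-founded when it is coanalytic. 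The expansion must then be designed so that, once $f$ (and hence which coordinates are which) is known, all of these well-foundedness questions can be decided continuously on a closed subset of $E(\cantor)$ that still maps onto $\cantor$.

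For the gadget itself I would take $Y'$ to be the product of $\cantor$ with a Polish space of branch data, with $r$ essentially the projection to the $\cantor$-factor. Three features must be engineered: (i) the fibres of $r$ should be perfect, so that there is room to thin; (ii) $E(\cantor)$ should be the set of codes carrying a \emph{coherent} system of branch witnesses, a condition of the form ``there exists a real coding a coherent assignment such that a $\coana$ condition holds'', which is exactly $\sh$; and (iii) reading off the $\cantor$-coordinate should be $\sh$-measurable. The surjectivity requirement $r''(F)=\cantor$ is what forces the fibres to be large: the naive choice $Y'=\cantor$, $r=\id$ already fails, since there $f\restriction F=g\circ(r\restriction F)$ with $r''(F)=\cantor$ would force $f$ itself to be continuous.

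The heart of the matter is to produce, from $f$, the closed set $F\subseteq E(\cantor)$ and the continuous $g\colon\cantor\to\cantor$ with $f\restriction F=g\circ(r\restriction F)$. Continuity of $f$ on $F$ means precisely that every $f_m$ is \emph{relatively clopen} on $F$, and the factorization means that each $f_m$ is constant along the fibres of $r$; the induced values then define $g$, whose continuity follows once each coordinate has been stabilized. I would construct $F$ by a fusion: enumerating the coordinates, at stage $m$ I thin a system of perfect sets of witnesses so that $f_m$ becomes clopen and fibrewise constant, applying Lemma~\ref{sacks} over $\cantor$ to keep a continuous selection of witnesses and thereby preserve $r''(F)=\cantor$. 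The analytic coordinates are the easy case: adjoining an honest branch through $c_m(z)$ makes $f_m(z)=1$ an open condition, and the complementary fibres are handled by thinning to where no branch can appear.

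The hard part, which I expect to follow Harrington--Kechris and Kechris most closely, is the coanalytic coordinates together with the demand that all coordinates be controlled simultaneously. When $f_m^{-1}(1)$ is coanalytic there is no branch witnessing membership --- membership is the well-founded side of $c_m(z)$ --- so one cannot force the set open by adjoining a witness; instead one must exploit the rank structure of $c_m(z)$ and decide well-foundedness on the thinned perfect set, and it is exactly here that the existential branch built into $E(\cantor)$ is consumed and the complexity rises to $\sh$. Carrying this out while keeping every earlier coordinate clopen and keeping $r$ onto $\cantor$ is the delicate, genuinely new step. Once $F$ and $g$ are obtained, Proposition~\ref{use}, applied with the Becker--Kahane--Louveau $\sh$-complete subset of $\cantor$, yields Theorem~\ref{completeness}.
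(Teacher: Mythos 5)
Your high-level architecture matches the paper's: reduce to $X=\cantor$, treat $f$ coordinatewise as a sequence of $\ana$/$\coana$ partitions, build $E(\cantor)$ as a $\sh$ set of ``witness codes'' fibred over $\cantor$ with perfect fibres, and stabilize the coordinates one at a time on a Luzin-scheme of closed sets. But the proposal has a genuine gap exactly where you flag ``the delicate, genuinely new step'': you never produce the mechanism that decides an arbitrary partition of $\cantor\times\cantor$ into $A\in\ana$ and $C\in\coana$ on a perfect set, and --- crucially --- does so \emph{uniformly and in advance}, so that the deciding perfect trees are already built into $E(\cantor)$ before $f$ is seen. This is the content of the paper's Proposition~\ref{proposition}: a single $\sh$ set $R$ and $\sh$-measurable $T:R\to P(\ctree)$ such that every such partition is decided by $\lim T(x)$ for some $x\in R$. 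Your fusion sketch thins witness sets \emph{after} seeing $f$, but the expansion $(E(\cantor),r)$ must be fixed independently of $f$, and the closed set $F$ must live inside it; without something like the Kleene recursion theorem to close this self-referential loop, the thinned sets have no reason to sit inside the pre-defined $E(\cantor)$.

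Your proposed treatment of the two sides of the partition also diverges from what actually works. The paper does not adjoin branches through ill-founded trees or exploit ranks on the well-founded side. Instead it decides which side is ``big'' by Baire category: take a Cohen real $c$ over a countable transitive model containing the code (such a $c$ can be chosen $\ldsh$ in the code), so whichever of $A_n$, $C_n$ contains $c$ is nonmeager. On the coanalytic side a nonmeager set with the Baire property contains a perfect set, and ``$\lim T\subseteq C_n$'' is $\lcoana$, hence admits a $\lsh$ uniformization; on the analytic side one applies Lemma~\ref{sacks} (Sacks uniformization) to a closed set projecting to $A_n$ to get a total continuous selection coded by a monotone map, which is again a $\lsh$ condition. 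The recursion theorem then diagonalizes so that the chosen tree equals $\{n\}(z)$ for some index $n$, making the whole assignment $\sh$-measurable. Your rank-based route for the coanalytic coordinates is not obviously executable and is in any case not needed; the category argument is what makes both cases symmetric and $\lsh$-definable. As it stands, the proposal is a plausible outline of the construction's shape but omits its essential engine.
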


We will need the following technical result (cf.
\cite[Sublemma 1.4.2]{harrington:kechris}).

\begin{proposition}\label{proposition}
  There exists a $\sh$ set $R\subseteq\cantor$ and a
  $\sh$-measurable function $T:R\rightarrow P(\ctree)$ such
  that for each partition of $\cantor\times\cantor$ into
  $A\in\ana$ and $C\in\coana$ there exists $x\in R$ such
  that
  $$\lim T(x)\subseteq A_x\quad\mbox{or}\quad \lim T(x)\subseteq C_x.$$
\end{proposition}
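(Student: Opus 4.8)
The plan is to follow the strategy of Harrington and Kechris \cite[Sublemma 1.4.2]{harrington:kechris}: parametrize all partitions by a good universal set, and then, for each partition, produce a \emph{single self-referential code} that serves simultaneously as the section index and as a code for a perfect tree homogeneous for that very section. First I would fix a good universal $\ana$ set $U\subseteq\cantor\times(\cantor\times\cantor)$, so that every analytic $A\subseteq\cantor\times\cantor$ equals $U_a$ for some $a\in\cantor$, with $C=(\cantor\times\cantor)\setminus U_a$; writing $U_a=p[F_a]$ for a closed $F_a\subseteq\cantor\times\cantor\times\baire$, it then suffices to produce, uniformly in $a$, one witness $x$ encoding $a$. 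I would let $R$ consist of those $x\in\cantor$ that recursively encode a triple $\langle a,S,i\rangle$ with $S\in P(\ctree)$ and $i\in 2$, such that either $i=0$ and there is a monotone map coding a total continuous $h\colon\lim S\to\baire$ with $(x,y,h(y))\in F_a$ for all $y\in\lim S$ (so that $\lim S\subseteq(U_a)_x=A_x$), or $i=1$ and $\lim S\subseteq C_x$, and I would set $T(x)=S$.

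The complexity bookkeeping is then light. On the side $i=0$ the totality of the function induced by the monotone map is a $\coana$ statement, and the inclusion of its graph in the closed set $F_a$ is closed, so the existence of such a map is $\exists(\coana)=\sh$; on the side $i=1$ the condition $\lim S\subseteq C_x$ reads $\forall y\in\lim S\,(y\in C_x)$ and, as a universal real quantifier over a $\coana$ matrix, is itself $\coana$. Hence $R\in\sh$, and since $S$ is extracted continuously from $x$, the map $T$ is $\sh$-measurable.

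The heart of the matter is the homogeneity property, where two difficulties interact. For a fixed section point $x$ the existence of \emph{some} homogeneous perfect set is automatic, because $A_x$ and $C_x$ partition $\cantor$: if $A_x$ is uncountable it contains a perfect set by the perfect set property for analytic sets, and otherwise $C_x$ is co-countable and contains a perfect set. The genuine obstruction is that $T(x)$ is committed in advance and that the section must be taken at the very point $x$ that encodes it. I would break this circularity with the recursion theorem: I define a uniform operator which, from a candidate point $x$ (encoding the given partition code $a$), applies the effective perfect set theorem to the section at $x$, building a perfect tree together with a total continuous uniformization into $F_a$ when $A_x$ is non-thin (here Lemma \ref{sacks} supplies the continuous selector on a perfect subtree), and otherwise building, by a Cantor scheme avoiding the countably many reals $\leq_h x$, a perfect tree inside $C_x$; the operator then returns the corresponding code $\langle a,S,i\rangle$. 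Since the construction is uniform in $x$, the recursion theorem yields a fixed point $x^\ast$ at which the tree encoded in $x^\ast$ is genuinely homogeneous for the section of $U_a$ at $x^\ast$, so $x^\ast\in R$ is the desired witness.

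I expect the main obstacle to be precisely this self-referential step: checking that the perfect-tree-plus-uniformization construction can be made uniform and continuous enough in the parameter $x$ for the recursion theorem to apply, and that its fixed point does land in $R$ on the correct side. The Gandy--Harrington topology on $\cantor\times\cantor$ is the natural device for carrying out the effective perfect set construction uniformly, in the spirit of \cite{harrington:kechris,kechris:completeness}, and careful attention to where the single outer real quantifier enters is what keeps the complexity of $R$ at $\sh$ and no higher.
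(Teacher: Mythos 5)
Your overall architecture --- a good universal set, an effective selection of a homogeneous perfect tree for each section, and Kleene's recursion theorem to close the self-referential loop --- is the Harrington--Kechris strategy that the paper also follows. Your dichotomy for choosing the side is a legitimate alternative to the paper's: you decide according to whether $A_x$ is uncountable (then Lemma \ref{sacks} gives a perfect tree with a total continuous uniformization into the closed set projecting to $A_x$) or countable (then $C_x$ is co-countable, and a perfect tree inside $C_x$ can be selected by $\lsh$-uniformization of the nonempty $\lcoana$ set $\{S:\lim S\subseteq C_x\}$); the paper instead fixes a $\ldsh$ Cohen real $c$ over models containing the parameter and puts the tree on whichever side contains $c$, that side being nonmeager. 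Both decisions are $\ldsh$ in the parameters, so this part is fine.

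There is, however, a genuine gap in the self-referential step, and it sits exactly where you define $R$ and $T$. You let $x$ literally encode a triple $\langle a,S,i\rangle$ with $S\in P(\ctree)$ a coordinate of the real $x$ (indeed you use this to argue that $T$ is read off continuously), and you then ask the recursion theorem for a point $x^{*}$ such that the tree \emph{written inside} $x^{*}$ coincides with the tree your operator builds from the section at $x^{*}$. The recursion theorem does not provide literal fixed points of maps on $\cantor$: your operator $\langle a,S,i\rangle\mapsto\langle a,S',i'\rangle$ is just some $\dualsh$-measurable self-map of the reals and nothing forces it to have a fixed point, since $S'$ depends on the section of $U_a$ at the whole point $x$, hence on $S$ and $i$ as well. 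What the recursion theorem does give is an \emph{index} $n\in\omega$ such that the $\lsh(z)$-computation with index $n$ outputs the same tree as your operator applied to $n$; the agreement is extensional, not an identity of codes. To repair the argument you must restructure $R$ and $T$ as the paper does: fix a good universal $\lsh$ set $U\subseteq\omega\times\cantor\times\omega\times\omega$ with a $\lsh$ uniformization $U^{*}$, let $R$ consist of the reals coding pairs $(n,z)$ for which $U^{*}_{(n,z)}$ is total and codes a perfect tree, set $T(x)=\{n\}(z)$, the tree \emph{computed} by the index, verify that your homogeneous-tree selection is a total $\ldsh(z)$-recursive function of $n$ (this is your Lemma-\ref{lemma}-type step, where the uniformization theorem enters), and only then apply the recursion theorem over $n\in\omega$. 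After this change $T$ is merely $\sh$-measurable rather than continuous, which is all the proposition requires; but as you have written it, no argument shows that your $R$ meets every partition code, and it could a priori be empty.
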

\begin{proof}
  We begin with a lemma.

  \begin{lemma}\label{lemma}
    Given $x\in\cantor$, for any partition of
    $\omega\times\cantor$ into $A\in\lana(x)$ and
    $C\in\lcoana(x)$ there is a $\ldsh(x)$-recursive
    function $T:\omega\rightarrow P(\ctree)$ such that for
    each $n\in\omega$ we have
    $$\lim T(n)\subseteq A_n\quad\mbox{or}\quad \lim T(n)\subseteq C_n.$$
  \end{lemma}
  \begin{proof}
    Pick a sufficiently large fragment ZFC$^*$ of ZFC and
    consider the set
    \begin{eqnarray*}
      H=\{c\in\cantor: \exists M\mbox{ a countable
        transitive model of ZFC}^*\\ 
      \mbox{ containing }x\mbox{ and }c\mbox{ is a
        Cohen real over }M\}.
    \end{eqnarray*} 
    Since $H$ is $\lsh(x)$, it contains a $\ldsh(x)$ element
    $c$. For each $n<\omega$ both $A_n$ and $C_n$ have the
    Baire property and are coded in any model containing
    $x$. Hence, if $c\in A_n$, then $A_n$ is nonmeager and
    if $c\in C_n$, then $C_n$ is nonmeager.  Put
    $$S=\{n\in\omega: c\in A_n\},\quad P=\{n\in\omega: c\in
    C_n\}$$ and note that both sets $S$ and $P$ are
    $\ldsh(x)$. We shall define the function $T$ on $S$ and
    $P$ separately.

    For each $n\in P$ the set $C_n$ is nonmeager, so in
    particular contains a perfect set. Consider the set
    $$P'=\{(n,T)\in\omega\times P(\ctree): n\in P\
    \wedge\ \lim T\subseteq C_n\}$$ and note that $P'$ is
    $\lsh(x)$. Pick any $\lsh(x)$ uniformization $T_P$ of
    $P'$ and note that $T_P$ is in $\ldsh(x)$.

    For each $n\in S$ the set $A_n$ is nonmeager. Let
    $D\subseteq\omega\times\cantor\times\baire$ be a
    $\lclosed(x)$ set projecting to $A$. Since for $n\in S$
    the set $A_n$ is uncountable, by Lemma \ref{sacks} there
    exists a perfect tree $T$ together with a continuous map
    $h:\{n\}\times\lim T\continuous D_n$. Note that, by
    compactness of $\lim T$, we can code a total continuous
    function on $\{n\}\times\lim T$ using a monotone map.
    Consider the set
    $$S'=\{(n,T)\in\omega\times P(\ctree): n\in S\ \wedge\
    \exists f:\{n\}\times\lim T\continuous D_n\}$$ and note
    that $S'$ is $\lsh(x)$. Pick any $\lsh(x)$
    uniformization $T_S$ of $S'$ and note that $T_s$ is
    $\ldsh(x)$.

    The function $T=T_P\cup T_S$ is as required.
  \end{proof}

  Now we finish the proof of the proposition. Fix a good
  $\lsh$-universal set
  $U\subseteq\omega\times\cantor\times\omega\times\omega$
  such that for each $A\subseteq\omega\times\omega$ and
  $x\in\cantor$ if $A\in\lsh(x)$, then there is $n<\omega$
  such that $$A=U_{(n,x)}.$$ Let $U^*\subseteq U$ be a
  $\lsh$-uniformization of $U$ treated as a subset of
  $(\omega\times\cantor\times\omega)\times\omega$ and write
  \begin{eqnarray*}
    R'=\{(n,x)\in\omega\times\cantor: \forall
    m<\omega\,\exists k<\omega\ (m,k)\in U^{*}_{(n,x)}\ \mbox{
      and }U^{*}_{(n,x)}\\ \mbox{ codes a characteristic function
      of a perfect tree}\},
  \end{eqnarray*}
  where the coding is done via a fixed recursive bijection
  from $\omega$ to $\ctree$. Note that $R'\in\lsh$.

  For $(n,x)\in R'$ we write $\{n\}(x)$ for the perfect tree
  coded by $U^{*}_{(n,x)}$. Note that
  $$(n,x)\mapsto\{n\}(x)$$ is a partial $\lsh$-recursive
  function from $\omega\times\cantor$ to $P(\ctree)$.  

  Now pick a recursive homeomorphism
  $h:\cantor\rightarrow\omega\times\cantor$ and write
  $h(x)=(n_x,x')$. Put $R=h^{-1}(R')$ and $T(x)=\{n_x\}
  (x')$ for $x\in R$.

  We claim that $R$ and $T$ are as required. To see this,
  pick a partition of $\cantor\times\cantor$ into $A\in\ana$
  and $C\in\coana$. Let $z\in\cantor$ be such that
  $A\in\lana(z)$ and $C\in\lcoana(z)$. Let
  $T:\omega\rightarrow P(\ctree)$ be a $\ldsh(z)$-recursive
  function as in Lemma \ref{lemma}. For each $n\in\omega$ we
  have that $T(n)$ is a total $\lsh(z)$-recursive function
  from $\omega$ to $\omega$ coding a perfect tree.
  Therefore, by the Kleene Recursion Theorem for
  $\lsh(z)$-recursive functions \cite[Theorem
  7A.2]{moschovakis} there is $n\in\omega$ such that
  $$T(n)=U_{(n,z)}=\{n\}(z).$$
  Now $x=h^{-1}(n,z)$ has the desired property.
\end{proof}

Now we are ready to prove Theorem \ref{expansion}.

\begin{proof}[Proof of Theorem \ref{expansion}]
  Pick a $\sh$ set $R\subseteq\cantor$ and a
  $\sh$-measurable function $T:R\rightarrow P(\ctree)$ as in
  Proposition \ref{proposition}. For each $x\in R$ let
  $t(x)\in T(x)$ be the first splitting node of $T(x)$ and
  let $T^0,T^1:R\rightarrow P(\ctree)$ be defined as
  $$T^i(x)=T(x)_{t(x)^\smallfrown i}$$ for $i\in 2$. Note
  that $T^0$ and $T^1$ are also $\sh$-measurable.

  For $x\in R$ let
  $$s_x^0,s_x^1:\cantor\rightarrow \lim T^i(x)$$ be 
  induced by the canonical isomorphism of $\ctree$ and
  $T^i(x)$. It is not difficult to see that for each $i\in2$
  the map $(x,y)\mapsto (x,s^i_x(y))$ is a $\sh$-measurable
  function from $R\times\cantor$ to $R\times\cantor$.

  For each $n\in\omega$ let $R_n\subseteq(\cantor)^{n+1}$ be
  defined as
  $$R_n=\{(x_0,\ldots,x_n)\in(\cantor)^{n+1}: 
  x_0\in R\wedge\dots\wedge x_{n-1}\in R\}.$$ For each
  $\tau\in\ctree$ put $X_\tau=(\cantor)^{|\tau|+2}$ and write
  $R_\tau$ for a copy of $R_{|\tau|}$ inside $X_\tau$.

  Pick a homemomorphism
  $q:\cantor\times\cantor\rightarrow\cantor$. For each
  $n\in\omega$ let
  $$p_{n+1}:\bigcupdot_{\tau\in 2^{n+1}} X_\tau\rightarrow
  \bigcupdot_{\tau\in 2^n} X_\tau$$ be a partial function
  such that $\dom(p_{n+1})=\bigcup_{\tau\in 2^{n+1}} R_\tau$
  and if $\tau\in2^{n+1}$, $\tau=\sigma^\smallfrown i$, then
  $p_{n+1}$ maps $R_\tau$ into $R_\sigma$ as follows:
  \begin{displaymath}\label{pn}
    p_{n+1}(x_0,\ldots,x_{n-1},x_n,x_{n+1})=(x_0,\ldots
    x_{n-1},s^i_{x_n}(q(x_n,x_{n+1})))\tag{$*$}    
  \end{displaymath}
  for $(x_0,\ldots,x_{n+1})\in R_\tau$ (the value is treated as a point in
  $R_\sigma$).  Note that each $p_{n+1}$ is $\sh$-measurable
  and $1$-$1$.

  We get the following sequence of spaces and partial
  $\sh$-measurable maps
  $$\cantor\times\cantor=X_\emptyset\xleftarrow{p_1}
  X_{\langle 0\rangle}\cupdot X_{\langle
    1\rangle}\xleftarrow{p_2}\ldots\xleftarrow{p_n}
  \bigcupdot_{\tau\in 2^n} X_\tau \xleftarrow{p_{n+1}}
  \bigcupdot_{\tau\in 2^{n+1}}
  X_\tau\xleftarrow{p_{n+2}}\ldots$$ and we write $t_n$ for
  $p_1\circ\ldots\circ p_n$ for $n>0$ and $t_0$ for the
  identity function on $\cantor\times\cantor$

  Now, let $E(\cantor)\subseteq\cantor\times\cantor$ be
  defined as
  $$E(\cantor)=\bigcap_{n<\omega}\rng(t_n).$$ Notice that
  $E(\cantor)\in\sh$. The map
  $r:E(\cantor)\rightarrow\cantor$ is defined as follows.
  For $n\in\omega$ and $\tau\in2^n$ we put
  \begin{displaymath}
    r(x)\restriction n=\tau\quad\mbox{iff}\quad
  (t_n)^{-1}(x)\in X_\tau.
  \end{displaymath}
  Note that $r$ is $\sh$-measurable.

  We need to check that $E(\cantor)$ and $r$ satisfy the
  properties of expansion. Let $f:\cantor\rightarrow Y$ be
  $\ana\cup\coana$-submeasurable, where $Y$ is a
  zero-dimensional Polish space. Since $Y$ is embedded into
  $\cantor$ and inherits its subbase from $\cantor$ via this
  embedding, we can assume that $Y=\cantor$ and the subbase
  consists of the sets $[(n,i)]$ for $n\in\omega, i\in2$.

  We shall define two trees $\langle
  x_\tau:\tau\in\ctree\rangle$ and $\langle
  u_\tau:\tau\in\ctree\rangle$ such that for each
  $\tau\in\ctree$ and $i\in2$ we have
  \begin{itemize}
  \item $x_\tau\in(\cantor)^{|\tau|+1}$ and $u_\tau\in
    2^{|\tau|+1}$
  \item $u_\tau\subseteq u_{\tau^\smallfrown i}$ and
    $x_\tau\subseteq x_{\tau^\smallfrown i}$,
  \end{itemize}
  and 
  \begin{displaymath}\label{property}
    (f\circ t_n)''(X_\tau|{x_\tau})\subseteq [u_\tau]\tag{$**$}
  \end{displaymath}
  where $X_\tau|{x_\tau}=\{y\in X_\tau:
  y\restriction(|\tau|+1)=x_\tau\ \wedge\ y_{n+1}\in
  T(y_n)\}$.

  Suppose this has been done. Note that then for each
  $n\in\omega$ and $\tau\in2^n$ the sets
  $F_\tau={t_n}''(X_\tau|{x_\tau})$ are closed since, by
  (\ref{pn}), $t_n$ is a continuous function of the last
  variable when the remaining ones are fixed. The sets
  $F_\tau$ form a Luzin scheme of closed sets. Put
  $$F=\bigcap_{n<\omega}\bigcup_{\tau\in 2^n}F_\tau.$$
  We define $g:\cantor\rightarrow\cantor$ so that
  $$g(y)\in\bigcap_{n<\omega}[u_{y\restriction n}].$$ Note
  that $g$ is continuous. From (\ref{property}) we get that
  $g\circ (r\restriction F)= f\restriction F$.

  Now we build the trees $\langle
  x_\tau:\tau\in\ctree\rangle$ and $\langle
  u_\tau:\tau\in\ctree\rangle$. We construct them by
  induction as follows. The two sets
  $$f^{-1}([(0,0)])\quad\mbox{and}\quad f^{-1}([(0,1)])$$
  form a partition $\cantor\times\cantor$ into two sets, one
  of which is $\ana$ and the other $\coana$, by the
  assumption that $f$ is $\ana\cup\coana$-submeasurable. By
  Proposition \ref{proposition} there is $x\in R$ and
  $i\in2$ such that $T(x)\subseteq f^{-1}[[(0,i)]]$. Put
  $x_\emptyset=x$, $u_\emptyset=\langle i\rangle$ and note
  that (\ref{property}) is satisfied.

  Suppose that $n>0$ and $x_\sigma$ and $u_\sigma$ are
  constructed for all $\sigma\in 2^{n-1}$. Fix $\tau\in 2^n$
  and let $\tau=\sigma^\smallfrown i$ for some
  $\sigma\in2^{n-1}$ and $i\in2$. We must find $x_\tau\in
  (\cantor)^{n+1}$ and $u_\tau\in2^{n+1}$.

  Note that the set $\{y\in X_\tau: y\restriction n
  =x_{\sigma}\}$ is homeomorphic to $\cantor\times\cantor$.
  Let $w:\cantor\times\cantor\rightarrow \{y\in X_\tau:
  y\restriction n =x_{\sigma}\}$ denote the canonical
  homeomorphism $y\mapsto {x_\sigma}^\smallfrown y$.
  Consider the partition of $\cantor\times\cantor$ into
  $$(f\circ i_{n+1}\circ w)^{-1}([(n-1,0)])\quad\mbox{and}\quad(f\circ
  i_{n+1}\circ w)^{-1}([(n-1,1)]).$$ One of them is $\ana$
  and the other $\coana$, so by Proposition
  \ref{proposition}, there exists $x\in R$ and $i\in2$ such
  that
  $$T(x)\subseteq (f\circ i_{n+1}\circ
  w)^{-1}([(n-1,i)]).$$ Put $x_\tau={x_\sigma}^\smallfrown
  x$ and $u_\tau={u_\sigma}^\smallfrown i$. To see that
  (\ref{property}) holds note that
  ${p_{n+1}}''(X_\tau|x_\tau)\subseteq X_\sigma|x_\sigma$ by
  the definition (\ref{pn}). Therefore, by the inductive
  assumption we have that $(f\circ
  t_{n+1})''(X_\tau|x_\tau)\subseteq
  [u_\sigma]\cap[(n-1,i)]=[u_\tau]$. This ends the
  construction and the whole proof.

\end{proof}

\section{Acknowledgements}

Part of this work has been done during my stay at the
Institut Mittag-Leffler in the autumn 2009. I would like to
thank Joan Bagaria, Daisuke Ikegami, Stevo Todor\v cevi\'c
and Jind\v rich Zapletal for stimulating discussions and
many useful comments.

\bibliographystyle{plain}
\bibliography{odkazy}

\begin{thebibliography}{10}

\bibitem{becker:harmonic}
Howard Becker, Sylvain Kahane, and Alain Louveau.
\newblock Some complete $\mathbf{\Sigma}^1_2$ sets in harmonic analysis.
\newblock {\em Transactions of American Mathematical Society}, 339:323--336,
  1993.

\bibitem{ellentuck:ramsey}
Erik Ellentuck.
\newblock A new proof that analytic sets are {R}amsey.
\newblock {\em Journal of Symbolic Logic}, 39:163--165, 1974.

\bibitem{prikry:theorem}
Fred Galvin and Karel Prikry.
\newblock Borel sets and {R}amsey's theorem.
\newblock {\em Journal of Symbolic Logic}, 38:193--198, 1973.

\bibitem{harrington:kechris}
Leo~A. Harrington and Alexander~S. Kechris.
\newblock On the determinacy of games on ordinals.
\newblock {\em Annals of Mathematical Logic}, 20(2):109--154, 1981.

\bibitem{ikegami:absoluteness}
Daisuke Ikegami.
\newblock Forcing absoluteness and regularity properties.
\newblock {\em Annals of Pure and Applied Logic}, 161:879--894, 2010.

\bibitem{kechris:classical}
Alexander~S. Kechris.
\newblock {\em Classical Descriptive Set Theory}.
\newblock Springer Verlag, New York, 1994.

\bibitem{kechris:completeness}
Alexander~S. Kechris.
\newblock On the concept of {$\mathbf{\Pi}^1_1$}-completeness.
\newblock {\em Proceedings of the American Mathematical Society},
  125(6):1811--1814, 1997.

\bibitem{moschovakis}
Yiannis~N. Moschovakis.
\newblock {\em Descriptive {S}et {T}heory}, volume 155 of {\em Mathematical
  Surveys and Monographs}.
\newblock American Mathematical Society, Providence, RI, second edition, 2009.

\bibitem{silver:ramsey}
Jack Silver.
\newblock Every analytic set is {R}amsey.
\newblock {\em Journal of Symbolic Logic}, 35:60--64, 1970.

\bibitem{zapletal:idealized}
Jind{\v{r}}ich Zapletal.
\newblock {\em Forcing {I}dealized}, volume 174.
\newblock Cambridge University Press, Cambridge, 2008.

\end{thebibliography}

\end{document}